\newtheorem{theorem}{Theorem}[section]
\newtheorem{lemma}[theorem]{Lemma}
\theoremstyle{definition}
\newtheorem{definition}[theorem]{Definition}
\theoremstyle{remark}
\newtheorem{remark}[theorem]{Remark}
\numberwithin{equation}{section}
\newcommand{\eq}[1][r]
   {\ar@<-3pt>@{-}[#1]
    \ar@<-1pt>@{}[#1]|<{}="gauche"
    \ar@<+0pt>@{}[#1]|-{}="milieu"
    \ar@<+1pt>@{}[#1]|>{}="droite"
    \ar@/^2pt/@{-}"gauche";"milieu"
    \ar@/_2pt/@{-}"milieu";"droite"}
\newcommand{\Apre}{{R}^{\text{pre}}}
\newcommand{\F}{\mathcal{F}}
\newcommand{\OO}{\mathcal{O}}
\DeclareMathOperator{\Spec}{Spec}
\begin{document}

\title[]{Geometric characterization of rings with Krull dimension $\leq 1$}

\author{Jes\'us Mart\'in Ovejero}

\address{Lloyds Banking Group, 10 Gresham Street, EC2V 7AE London}
\email{}

\subjclass{ 14R99}

\date{}


\keywords{Affine schemes, algebraic geometry}

\begin{abstract}
	In this paper we provide a new characterization of noetherian rings with Krull dimension $\leq 1$ in terms of its spectrum. 
\end{abstract}

\maketitle

\section{Introduction}
The aim of this short paper is to prove the following theorem 
\begin{theorem}Let $R$ be a commutative noetherian ring. Then  $\dim(R)\leq 1$ if and only if every open set of $X=\Spec(R)$ is affine (when $\dim(R)=1$ we assume in the converse that $R$ is reduced).\end{theorem}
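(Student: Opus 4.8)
The plan is to reduce both implications to a single tool: the comparison between the cohomology of an open subscheme $U\subseteq X=\Spec(R)$ and local cohomology, used together with Serre's cohomological criterion for affineness and with Grothendieck's vanishing and non-vanishing theorems. Fix an open $U\subseteq X$ and write $U=X\setminus V(I)$ for an ideal $I\subseteq R$. Since $X$ is noetherian, so is $U$, and every quasi-coherent sheaf on $U$ has the form $\widetilde{M}\vert_U$ for some $R$-module $M$. The long exact sequence of cohomology with supports along $V(I)$, combined with the vanishing $H^{i}(X,\widetilde{M})=0$ for $i>0$ on the affine scheme $X$, yields the exact sequence
\[
0\longrightarrow H^{0}_{I}(M)\longrightarrow M\longrightarrow \Gamma(U,\widetilde{M})\longrightarrow H^{1}_{I}(M)\longrightarrow 0
\]
and, in particular, canonical isomorphisms $H^{i}(U,\widetilde{M})\cong H^{i+1}_{I}(M)$ for all $i\geq 1$. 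Everything below is extracted from this dictionary.

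For the forward implication, assume $\dim(R)\leq 1$. Then every $R$-module $M$ satisfies $\dim\operatorname{Supp}(M)\leq\dim(R)\leq 1$, so Grothendieck's vanishing theorem gives $H^{j}_{I}(M)=0$ for every $j\geq 2$. Plugging this into the isomorphisms above shows $H^{i}(U,\F)=0$ for all $i\geq 1$ and all quasi-coherent $\F$ on $U$; since $U$ is noetherian, Serre's criterion then forces $U$ to be affine. (This argument uses no reducedness hypothesis, including in the boundary case $\dim(R)=1$.)

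For the converse I argue contrapositively: supposing $\dim(R)\geq 2$, I exhibit a non-affine open. Choose a maximal ideal $\mathfrak{m}$ with $d:=\operatorname{ht}(\mathfrak{m})\geq 2$ (possible since $\dim(R)=\sup_{\mathfrak{n}}\operatorname{ht}(\mathfrak{n})$ over maximal $\mathfrak{n}$) and put $U=X\setminus\{\mathfrak{m}\}=X\setminus V(\mathfrak{m})$, a nonempty open of $X$. Taking $M=R$ and $i=d-1\geq 1$ in the dictionary gives $H^{d-1}(U,\OO_U)\cong H^{d}_{\mathfrak{m}}(R)$; as $H^{d}_{\mathfrak{m}}(R)$ is $\mathfrak{m}$-power torsion it agrees with $H^{d}_{\mathfrak{m}R_{\mathfrak{m}}}(R_{\mathfrak{m}})$, which is nonzero by Grothendieck's non-vanishing theorem applied to the nonzero finitely generated module $R_{\mathfrak{m}}$ over the $d$-dimensional local ring $R_{\mathfrak{m}}$. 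Hence $H^{d-1}(U,\OO_U)\neq 0$ with $d-1\geq 1$, so $U$ is not affine, a contradiction. The delicate point of the proof is not any single step but the correct setup and citation of the ingredients — the cohomology/local-cohomology exact sequence and shifted isomorphisms, and the sharp forms of Grothendieck vanishing ($H^{j}_{I}(M)=0$ for $j>\dim M$) and non-vanishing ($H^{\dim M}_{\mathfrak{m}}(M)\neq 0$ for $0\neq M$ finitely generated over a local ring). A more hands-on alternative — a \v{C}ech/Mayer--Vietoris vanishing for noetherian schemes of dimension $\leq 1$ in one direction, and an algebraic Hartogs computation of $\Gamma(U,\OO_U)$ (after reducing, via the fact that "every open is affine" passes to closed subschemes and to localizations, to a normal local domain, which is where a reducedness-type input becomes convenient) in the other — is also available, at the cost of more bookkeeping.
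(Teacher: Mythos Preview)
Your proof is correct and uses the same core ingredients as the paper --- the local-cohomology long exact sequence, Serre's criterion, and Grothendieck's vanishing/non\-vanishing theorems --- but it is organized more efficiently. For the direction $\dim(R)\leq 1\Rightarrow$ every open is affine, the paper first invokes a lemma (requiring $R$ reduced) to reduce to opens of the form $X\setminus\{p_1,\ldots,p_r\}$ with the $p_i$ closed points, and then inducts on $r$ via Mayer--Vietoris for local cohomology; you bypass all of this by applying Grothendieck vanishing directly to $H^{i+1}_I(M)$ for an \emph{arbitrary} ideal $I$, which removes both the reducedness hypothesis and the induction. For the other direction, the paper argues positively via an excision lemma identifying $H^i_p(X,\F)$ with $H^i_{pR_p}(\F_p)$ and then appeals to the cohomological bound on $\dim(\F_p)$; your contrapositive use of non-vanishing at a maximal ideal of height $\geq 2$ is the same idea with less scaffolding. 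In short, your route is a genuine streamlining of the paper's argument and in fact yields a slightly stronger statement (no reducedness needed in either direction).
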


\section{Characterization of rings with Krull dimension $\leq 1$}
Before proving the main result of this paper, let us introduce some notations, definitions, and previous results.  Let $R$ be a commutative noetherian ring with unit, and let $\Spec(R)$  be the set of prime ideals of $R$  endowed with the Zariski topology.
\begin{definition}The elements  of $R$ will be called functions, while the elements of $\Spec(R)$ will be called points.\end{definition}
\begin{definition}A function  $f\in R$  vanishes at a point  $x\in\Spec(R)$  if $f\in\mathfrak{p}_{x}$, i.e., if the class of  $f$ in $R/\mathfrak{p}_{x}$ is $0$.\end{definition}
Let $X=\Spec(R)$ be the spectrum of $R$. The structure of scheme on $X$ is given by  considering on it the structural sheaf, which is sheaf associated to the presheaf that assigns to each open set $U\subseteq X$ the ring of non-vanishing functions on $U$, that is:
$$\Apre(U)=R_{S}\text{,}\hspace{0.3cm}\text{with}\hspace{0.3cm} S=\big\{\text{Multiplicative set of non vanishing functions on U}\big\}$$
The open sets of the form  $$U_{f}:=\Spec(R_{f})\text{,}$$ where $R_{f}$ denotes the localization of $R$ by the multiplicative system defined by $f$, are know as affine open sets. Those open sets are a basis for the Zariski topology.
\begin{definition}A topological space is a Zariski topological space if it is noetherian and sober.\end{definition}
\begin{remark}The spectrum of a ring is a Zariski topological space. The Hilbert's nullstellensatz  exhibits a bijective correspondence between  irreducible closed sets and  prime radical ideals. In particular, the point defined by a prime ideal in the spectrum of a ring is precisely the generic point of the closed set defined by the set of zeros of the ideal. It is clear now that if an irreducible closed has two generic points, then, those two points, must be equal.\end{remark}

\begin{lemma}\label{lema1} Let $X$ be a Zariski topological space, $p\in X$ a closed point and $X_{p}$ the subset of $X$ formed by all the points $q\in X$ such that $p\in \overline{\{q\}}$. Let us consider in   $X_{p}$  the induced topology by $X$, and let $j:X_{p}\hookrightarrow X$ be the canonical inclusion. For every sheaf $\F$, and for each $i\in\mathbb{N}_0$,  the following holds
	$${\rm{H}}^{i}_{p}(X,\F)= {\rm{H}}^{i}_{p}(X_{p},\F_{p}:=j^{\ast}\F)$$
\end{lemma}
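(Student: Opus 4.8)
The plan is to realize $X_{p}$ as the cofiltered limit $\varprojlim_{U\ni p}U$ of the open neighbourhoods of $p$ in $X$, to use that sheaf cohomology — and hence also cohomology with supports — commutes with such limits, and to combine this with excision and the long exact sequence of local cohomology.

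First I would set up the topology. Since $\{p\}$ is closed in $X$ it is closed in $X_{p}$, so $X_{p}\setminus\{p\}$ is open in $X_{p}$, and reading off the definition of the subspace topology one checks directly that
$$X_{p}=\varprojlim_{U\ni p}U,\qquad X_{p}\setminus\{p\}=\varprojlim_{U\ni p}\big(U\setminus\{p\}\big),$$
the limits being taken over the cofiltered poset of open neighbourhoods $U$ of $p$ ordered by reverse inclusion, with the open inclusions as transition maps. Because $X$ is a Zariski space, each $U$ (and each $U\setminus\{p\}$) is a noetherian sober space, i.e. a noetherian spectral space, and each transition map is an open immersion of noetherian spaces, hence a quasi-compact (spectral) map. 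Writing $\pi_{U}\colon X_{p}\hookrightarrow U$ for the inclusion one has $\pi_{U}^{\ast}(\F|_{U})=j^{\ast}\F$ compatibly, so the system $(\F|_{U})_{U}$ has colimit $j^{\ast}\F$ on $X_{p}$, and likewise on the punctured spaces.

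Next I would invoke the limit theorem for sheaf cohomology along cofiltered systems of spectral spaces with spectral transition maps (SGA~4, Exp.~VI; or the Stacks project, cohomology and limits): for every $n$,
$$\mathrm{H}^{n}(X_{p},j^{\ast}\F)=\varinjlim_{U\ni p}\mathrm{H}^{n}(U,\F|_{U}),\qquad \mathrm{H}^{n}\big(X_{p}\setminus\{p\},j^{\ast}\F\big)=\varinjlim_{U\ni p}\mathrm{H}^{n}\big(U\setminus\{p\},\F|_{U}\big).$$
On the other hand, excision for cohomology with supports gives, for each $U$, a canonical isomorphism $\mathrm{H}^{i}_{p}(U,\F|_{U})\cong\mathrm{H}^{i}_{p}(X,\F)$, so the filtered colimit of these groups over $U$ is just $\mathrm{H}^{i}_{p}(X,\F)$. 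Now splice: for each $U$ there is the exact sequence
$$\cdots\to\mathrm{H}^{i}_{p}(U,\F|_{U})\to\mathrm{H}^{i}(U,\F|_{U})\to\mathrm{H}^{i}\big(U\setminus\{p\},\F|_{U}\big)\to\mathrm{H}^{i+1}_{p}(U,\F|_{U})\to\cdots$$
and the analogous one on $X_{p}$; passing to the filtered colimit over $U$ is exact and, by the displays above, turns the $U$-sequence into one with exactly the $\mathrm{H}^{i}$ and $\mathrm{H}^{i}(\cdot\setminus\{p\})$ terms of the $X_{p}$-sequence, with compatible comparison maps. The five lemma then identifies the remaining terms: $\mathrm{H}^{i}_{p}(X,\F)=\varinjlim_{U}\mathrm{H}^{i}_{p}(U,\F|_{U})=\mathrm{H}^{i}_{p}(X_{p},j^{\ast}\F)$.

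I expect the only genuine obstacle to be the justification of the limit theorem in this generality — namely checking that a Zariski topological space is a noetherian spectral space, that an open immersion between such spaces is a spectral map, and that the hypotheses of the colimit‑of‑cohomology statement are met — after which everything is formal. An essentially equivalent but more self‑contained route, which I would also mention, is to observe that for the closed point $p$ the sheaves $\underline{\mathrm{H}}^{i}_{\{p\}}\F$ are supported on $\{p\}$ and hence flasque, so the local‑to‑global spectral sequence degenerates and $\mathrm{H}^{i}_{p}(X,\F)=(\underline{\mathrm{H}}^{i}_{\{p\}}\F)_{p}$, and similarly on $X_{p}$; the lemma is then reduced to the purely local assertion that the stalk at $p$ of the local cohomology sheaf does not change on passing from $X$ to $X_{p}$, which is again the limit statement above.
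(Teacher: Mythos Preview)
Your argument is correct and complete, but it is a genuinely different route from the paper's. The paper proceeds by reducing to degree $0$: it argues that, since $j^{\ast}$ is exact, it suffices to show $\Gamma_{p}(X,\F)\cong\Gamma_{p}(X_{p},j^{\ast}\F)$, and then checks this isomorphism by hand --- a section of $j^{\ast}\F$ supported at $p$ is represented by some $s_{U}\in\F(U)$ with support $\{p\}$, which one extends by zero across $X\setminus\{p\}$ to a global section with support $\{p\}$. Your proof instead realises $X_{p}$ as the cofiltered limit $\varprojlim_{U\ni p}U$ of noetherian spectral spaces, invokes the SGA~4 limit theorem to identify $\mathrm{H}^{n}(X_{p},j^{\ast}\F)$ and $\mathrm{H}^{n}(X_{p}\setminus\{p\},j^{\ast}\F)$ with filtered colimits over $U$, and then uses excision plus the five lemma on the long exact sequences. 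The paper's approach is more elementary and self-contained (no appeal to limits of spectral spaces), though its ``by construction'' reduction to degree $0$ implicitly uses that $j^{\ast}$ carries injectives to $\Gamma_{p}$-acyclics; your approach trades that for heavier external input (the colimit theorem) but is entirely uniform in $i$ and makes every step explicit. Your alternative sketch via the degeneration $\mathrm{H}^{i}_{p}(X,\F)=(\underline{\mathrm{H}}^{i}_{\{p\}}\F)_{p}$ is closer in spirit to the paper's reduction, and would also work.
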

\begin{proof}By  construction, it is enough to show that
	$\Gamma_{p}(X,\F)\simeq \Gamma_{p}(X_{p},\F_{p})$. Let us consider the canonical ring homomorphism
	$$\Gamma(X,\F)\rightarrow \Gamma(X_{p},\F_{p})=\varprojlim_{p\in U}\F(U)\text{.}$$
	The above homomorphism induces another one in the localization
	$$f:\Gamma_{p}(X,\F)\rightarrow\Gamma_{p}(X_{p},\F_{p})\text{,}$$
	so it is enough to prove that $f$ is  bijective. \\\
	
	If $f(t)=f(s)\in \Gamma_{p}(X_{p},\F_{p})$, then, $t_{p}=s_{p}$, because $f$ maps each section to its germ at the point $p$. This is true for each point $p$, so $s=t$ and therefore $f$ is injective.\\\
	
	We claim that $f$ is surjective. Let $s\in \Gamma_{p}(X_{p},\F_{p})=\F_{p}$ be a section.  It is possible to find a open neighbourhood  $p\in U$ and a section $s_{U}\in \F(U)$ such that  $s_{U}$  is a representative of the section $s$. Without loss of generality, we can assume that the support of $s_{U}$ is $p$. Setting $V:=X-\{p\}$ we get $(s_{U})_{\mid U\cap V}=0$, so extending   $s_{U}$ by the zero section we obtain a global section with support $p$. We conclude that $f$ is surjective.  \end{proof}

\begin{lemma}\label{lemacrucial}Let $R$ be a noetherian reduced commutative ring with  ${\rm{dim}}(R)\leqslant 1$. Every open set of  $X=\Spec(R)$ is the complement of a finite number of closed points.\end{lemma}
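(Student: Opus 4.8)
The plan is to pass to the closed complement. If $U\subseteq X$ is open, put $Z:=X\setminus U$; the assertion that $U$ is the complement of finitely many closed points is the same as saying that $Z$ is a finite set of closed points, and this is what I would prove. Since $R$ is noetherian, $X=\Spec(R)$ is a Zariski topological space, so the closed subset $Z$ is noetherian and hence has finitely many irreducible components $Z_{1},\dots,Z_{n}$, with $Z=Z_{1}\cup\cdots\cup Z_{n}$. By soberness (the Remark above) each $Z_{i}$ has a unique generic point $x_{i}$, i.e.\ $Z_{i}=\overline{\{x_{i}\}}$, and $\mathfrak{p}_{x_{i}}$ is one of the finitely many primes of $R$ minimal over the ideal defining $Z$. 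It therefore suffices to show that each $x_{i}$ is a closed point and that $Z_{i}=\{x_{i}\}$.

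This is where the bound $\dim(R)\leq 1$ enters. For each $i$ one has $\operatorname{ht}(\mathfrak{p}_{x_{i}})+\dim(R/\mathfrak{p}_{x_{i}})\leq\dim(R)\leq 1$ (concatenate a saturated chain of primes below $\mathfrak{p}_{x_{i}}$ with one above it). If $\dim(R/\mathfrak{p}_{x_{i}})=0$ then $\mathfrak{p}_{x_{i}}$ is maximal, so $x_{i}$ is a closed point and $Z_{i}=V(\mathfrak{p}_{x_{i}})=\{x_{i}\}$, exactly as wanted. The only alternative is $\dim(R/\mathfrak{p}_{x_{i}})\geq 1$, and then the inequality forces $\operatorname{ht}(\mathfrak{p}_{x_{i}})=0$: thus $\mathfrak{p}_{x_{i}}$ is a minimal prime of $R$ and $Z_{i}=V(\mathfrak{p}_{x_{i}})$ is a one-dimensional irreducible component of $X$. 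So, a priori, $Z$ is a finite union of closed points together with one-dimensional irreducible components of $X$.

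The main obstacle is to discard this second type of piece, and I expect it to be the place where the reducedness hypothesis is used. A one-dimensional component $C=V(\mathfrak{p})$ contains its own, non-closed, generic point, hence is never a finite set of closed points; so what really must be shown is that $Z$ contains no irreducible component of $X$, equivalently that $U$ meets every irreducible component. Granting this, the argument finishes at once: if $\mathfrak{q}_{1},\dots,\mathfrak{q}_{r}$ are the minimal primes (so $\bigcap_{i}\mathfrak{q}_{i}=0$ because $R$ is reduced), each $Z\cap V(\mathfrak{q}_{i})$ is a \emph{proper} closed subset of $\Spec(R/\mathfrak{q}_{i})$, a one-dimensional noetherian domain, hence of dimension $0$, hence a finite set of maximal ideals of $R/\mathfrak{q}_{i}$, i.e.\ of closed points of $X$; their union over $i=1,\dots,r$ is $Z$. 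The only non-formal step is thus the exclusion of one-dimensional components — and I would scrutinize exactly how reducedness alone achieves it, since without some hypothesis forcing $U$ to be dense the statement as written can fail (for instance $R=k[x,y]/(xy)$ with $k$ a field and $U=U_{y}=\Spec(R_{y})$, which is affine while $X\setminus U$ is a whole one-dimensional component of $X$).
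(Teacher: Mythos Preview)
Your decomposition of $Z$ into irreducible components, followed by the dichotomy that each $Z_{i}$ is either a closed point or an entire one-dimensional irreducible component of $X$, is exactly the strategy the paper follows. Where you stop and flag an obstruction, the paper attempts to push through by ``reducing to the irreducible case'': it cites the fact that a reduced noetherian scheme is affine iff each of its irreducible components is affine, and on that basis declares that one may assume $X$ irreducible. But that citation is a non sequitur for the present lemma, which says nothing about affineness; the intended reduction is simply to intersect $U$ and $Z$ with each irreducible component $X_{j}$ of $X$ and argue inside $X_{j}$. In that irreducible setting the paper then asserts $\dim U=\dim X$, hence $\dim Z=0$, which tacitly assumes $U\cap X_{j}\ne\varnothing$. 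Your counterexample $R=k[x,y]/(xy)$ with $U=U_{y}$ is precisely this failure mode: $U$ misses the component $V(y)$, so $Z=V(y)$ is one-dimensional and not a finite set of closed points. The lemma as stated is false, and the gap you located is the same gap present in the paper's own proof.

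A correct version would add the hypothesis that $U$ be dense in $X$ (equivalently, that $Z$ contain no minimal prime of $R$); with that amendment both your argument and the paper's go through verbatim, and this is all that is actually needed for the inductive step in the proof of the main theorem, since removing a closed point from a scheme never removes a one-dimensional component.
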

\begin{proof} Let  $U$ be  an open set of $X$ and let   $Z=X-U$ be its complement in $X$. By hypothesis, $X$ is a noetherian topological space, therefore, $Z$ can be expressed as the finite union of its irreducible components, being, each one of them, closed sets in $X$. It is enough to show that if $Z$ is a closed irreducible set of $X$ then, $Z$, is a closed point. By hypothesis $X$ is reduced, so $X$ is affine if and only if its irreducible components are affine  (\cite[Ex III. 3.2]{har}). We can assume that $X$ is irreducible and   $Z=X-U$. The dimension of  $U$ coincide with the dimension of $X$ and therefore $Z$ is irreducible and $0$-dimensional, thus $Z$ is discrete and in particular it is a closed point. 
\end{proof}
Since the Serre's criterion for affineness will be used in the proof of the main Theorem, we recall the statement.

\begin{theorem}\label{serre}[Serre's criterion for affineness] Let $X$ be a noteherian scheme. Then the following conditions are equivalent
	\begin{enumerate}
		\item $X$ is affine;
		\item ${\rm{H}^{i}}(X,\F)=0$ for all quasi-coherent sheaf  $\F$ and for all $i>0$;
		\item ${\rm{H}^{i}}(X,\mathcal{I})=0$ for all coherent sheaf of ideals $\mathcal{I}$ and for all $i>0$.
	\end{enumerate}
\end{theorem}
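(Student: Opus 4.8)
The plan is to establish the cycle of implications $(1)\Rightarrow(2)\Rightarrow(3)\Rightarrow(1)$. Of these, $(2)\Rightarrow(3)$ requires no work: on a noetherian scheme a coherent sheaf of ideals is in particular quasi-coherent, so $(2)$ applies verbatim. The content is therefore concentrated in $(1)\Rightarrow(2)$ and in $(3)\Rightarrow(1)$.

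For $(1)\Rightarrow(2)$ I would reduce to $X=\Spec A$ with $A$ a noetherian ring and $\F=\widetilde{M}$ for an $A$-module $M$, and then use the basic fact that, because $A$ is noetherian, the sheaf $\widetilde{I}$ associated to an injective $A$-module $I$ is flasque. Choosing an injective resolution $0\to M\to I^{0}\to I^{1}\to\cdots$ of $M$ in the category of $A$-modules and applying $\widetilde{\phantom{x}}$ yields a flasque resolution $0\to\widetilde{M}\to\widetilde{I}^{0}\to\widetilde{I}^{1}\to\cdots$ of $\widetilde{M}$ (exactness is preserved because $\widetilde{\phantom{x}}$ is exact). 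Since $\Gamma(X,\widetilde{N})=N$ for every $A$-module $N$, applying $\Gamma(X,-)$ returns the original resolution, which is exact in positive degrees; hence ${\rm{H}}^{i}(X,\F)={\rm{H}}^{i}(X,\widetilde{M})=0$ for all $i>0$.

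The substantial implication is $(3)\Rightarrow(1)$. Here I would invoke the affineness criterion \cite[Ex.\ II.2.17]{har}: $X$ is affine provided there exist global functions $f_{1},\dots,f_{n}\in A:=\Gamma(X,\OO_{X})$ generating the unit ideal and such that each $X_{f_{i}}$ is affine. To manufacture such functions, fix a closed point $P\in X$ and an affine open neighbourhood $U$ of $P$; set $Y:=X\setminus U$ and $Y':=Y\cup\{P\}$, both closed in $X$, and let $\mathcal{I}_{Y'}\subseteq\mathcal{I}_{Y}$ be the ideal sheaves of their reduced closed subscheme structures. The quotient $\mathcal{Q}:=\mathcal{I}_{Y}/\mathcal{I}_{Y'}$ is the skyscraper sheaf at $P$ with value the residue field $k(P)$, so from the short exact sequence
$$0\to\mathcal{I}_{Y'}\to\mathcal{I}_{Y}\to\mathcal{Q}\to 0$$
together with ${\rm{H}}^{1}(X,\mathcal{I}_{Y'})=0$ (which is hypothesis $(3)$, since $\mathcal{I}_{Y'}$ is a coherent ideal sheaf) one gets that $\Gamma(X,\mathcal{I}_{Y})\to\Gamma(X,\mathcal{Q})=k(P)$ is surjective. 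Pick $f\in\Gamma(X,\mathcal{I}_{Y})\subseteq A$ mapping to $1$: then $f$ vanishes on $Y$, so $X_{f}\subseteq U$ and hence $X_{f}=U_{f}$ is a principal open of the affine scheme $U$, therefore affine, while $f(P)\neq 0$ so $P\in X_{f}$. As $X$ is noetherian it is quasi-compact, so finitely many of these, say $X_{f_{1}},\dots,X_{f_{n}}$, cover every closed point of $X$; and since a nonempty closed subset of a quasi-compact scheme always contains a point that is closed in the ambient space, the $X_{f_{i}}$ in fact cover all of $X$. Finally, to turn this covering into the required relation $1=\sum a_{i}f_{i}$, note that the map $\OO_{X}^{\,n}\xrightarrow{(f_{1},\dots,f_{n})}\OO_{X}$ is surjective with coherent kernel $\mathcal{F}$; filtering $\OO_{X}^{\,n}$ by its first $k$ coordinates induces a filtration of $\mathcal{F}$ whose successive quotients embed into $\OO_{X}$ and so are coherent sheaves of ideals, whence ${\rm{H}}^{1}(X,\mathcal{F})=0$ by hypothesis $(3)$ and an induction on $k$ using the long exact cohomology sequences. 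Consequently $\Gamma(X,\OO_{X}^{\,n})\to\Gamma(X,\OO_{X})$ is surjective, the $f_{i}$ generate the unit ideal, and the affineness criterion finishes the proof.

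The main obstacle I anticipate is the very last step of $(3)\Rightarrow(1)$: upgrading a finite cover by affine principal opens $X_{f_{i}}$ to the algebraic statement that the corresponding functions generate the unit ideal of $\Gamma(X,\OO_{X})$. This is exactly where one must use the full strength of $(3)$ --- vanishing of ${\rm{H}}^{1}$ for \emph{every} coherent ideal sheaf --- rather than just for the particular sheaves $\mathcal{I}_{Y'}$ produced earlier, the d\'evissage of $\mathcal{F}$ into ideal sheaves being the device that bridges the gap. A secondary technical point worth isolating is the claim that it suffices to cover the closed points of $X$, which relies on the fact that every nonempty closed subscheme of a quasi-compact scheme possesses a point closed in the ambient scheme.
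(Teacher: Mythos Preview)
The paper does not prove this theorem at all: Serre's criterion is merely recalled (``we recall the statement'') as a known tool, with an implicit reference to \cite{har}, and is then invoked in the proof of the main result. There is therefore no proof in the paper to compare your attempt against.

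For what it is worth, your sketch is correct and is essentially the standard argument (cf.\ \cite[Theorem~III.3.7]{har}): the flasque-resolution computation for $(1)\Rightarrow(2)$ via injective $A$-modules on a noetherian affine, the skyscraper-sheaf trick with $\mathcal{I}_{Y}/\mathcal{I}_{Y'}\cong k(P)$ to manufacture global functions $f$ with $X_{f}$ affine and $P\in X_{f}$, and the d\'evissage of $\ker\bigl((f_{1},\dots,f_{n}):\OO_{X}^{\,n}\to\OO_{X}\bigr)$ into coherent ideal sheaves to force $(f_{1},\dots,f_{n})=(1)$. The two technical points you flag --- that closed points suffice because every nonempty closed subset of a noetherian scheme contains a point closed in $X$, and that only $\mathrm{H}^{1}$-vanishing for ideal sheaves is needed in the final step --- are both handled correctly.
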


\begin{theorem} Let  $R$ be a commutative noetherian ring, then $\dim(R)\leq 1$ if and only if every open set of $X=\Spec(R)$ is affine  (when $\dim(R)=1$  we assume in the converse that $R$ is reduced).\end{theorem}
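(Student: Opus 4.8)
\medskip
\noindent\emph{Proof plan.}
The plan is to derive both implications from Serre's criterion (Theorem~\ref{serre}) applied to open subschemes, via the long exact sequence of cohomology with supports. For an open $U\subseteq X=\Spec(R)$ with closed complement $Z=X\setminus U$ and a quasi-coherent sheaf $\F$ on $X$, this sequence reads
$$\cdots\to {\rm H}^{i}_{Z}(X,\F)\to {\rm H}^{i}(X,\F)\to {\rm H}^{i}(U,\F|_{U})\to {\rm H}^{i+1}_{Z}(X,\F)\to\cdots,$$
and since $X$ is affine the groups ${\rm H}^{i}(X,\F)$ vanish for $i>0$, so ${\rm H}^{i}(U,\F|_{U})\cong {\rm H}^{i+1}_{Z}(X,\F)$ for all $i\geq 1$. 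As $X$ is noetherian, every quasi-coherent sheaf on $U$ extends to one on $X$; hence, by Serre's criterion, $U$ is affine if and only if ${\rm H}^{j}_{Z}(X,\F)=0$ for every quasi-coherent $\F$ and every $j\geq 2$. The whole theorem thus reduces to controlling these groups.

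First I would prove the direct implication. If $\dim(R)=0$ the space $X$ is finite and discrete, and every open subset is trivially affine. If $\dim(R)=1$ and $R$ is reduced, I would use \cite[Ex.~III.3.2]{har} to reduce to the case where $X$ is irreducible, hence $R$ a one-dimensional domain; then Lemma~\ref{lemacrucial} exhibits the complement of any proper open as a finite set of closed points $Z=\{p_{1},\dots,p_{n}\}$, so that (these points being isolated in $Z$) ${\rm H}^{j}_{Z}(X,\F)\cong\bigoplus_{k}{\rm H}^{j}_{p_{k}}(X,\F)$. By Lemma~\ref{lema1} each summand equals ${\rm H}^{j}_{p_{k}}\big(\Spec\OO_{X,p_{k}},\F_{p_{k}}\big)$, i.e.\ the classical local cohomology ${\rm H}^{j}_{\mathfrak p_{k}R_{\mathfrak p_{k}}}(\F_{p_{k}})$ of a module over the local ring $R_{\mathfrak p_{k}}$, whose dimension is $\leq 1$; Grothendieck's vanishing theorem annihilates it for $j\geq 2$, and Serre's criterion then gives that $U$ is affine. (Reducedness is needed only for the passage to irreducible components; alternatively, writing $Z=V(I)$ and $\F=\widetilde{M}$, one has ${\rm H}^{j}_{Z}(X,\widetilde{M})\cong {\rm H}^{j}_{I}(M)$, and Grothendieck vanishing applies directly since $\dim(M)\leq\dim(R)\leq 1$, covering the non-reduced case too.)

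For the converse I would argue by contraposition. Suppose $\dim(R)\geq 2$ and pick a maximal ideal $\mathfrak m$ of height $d\geq 2$; put $U=X\setminus\{\mathfrak m\}$, which is open because $\{\mathfrak m\}$ is closed. By Lemma~\ref{lema1}, ${\rm H}^{d}_{\{\mathfrak m\}}(X,\OO_{X})\cong {\rm H}^{d}_{\mathfrak m R_{\mathfrak m}}(R_{\mathfrak m})$, which is nonzero by Grothendieck's non-vanishing theorem, $R_{\mathfrak m}$ being noetherian local of dimension $d$. Feeding this into the exact sequence with $\F=\OO_{X}$ yields ${\rm H}^{d-1}(U,\OO_{U})\cong {\rm H}^{d}_{\{\mathfrak m\}}(X,\OO_{X})\neq 0$ with $d-1\geq 1$; since $\OO_{U}$ is quasi-coherent, Serre's criterion shows that $U$ is not affine.

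The hard part is the analysis of ${\rm H}^{\bullet}_{Z}(X,\F)$, and the key is Lemma~\ref{lema1}, which reduces it to classical local cohomology over the local rings $\OO_{X,p}=R_{\mathfrak p}$; after that, the dimension bound $\dim R_{\mathfrak p}\leq 1$ (resp.\ $\geq 2$) together with Grothendieck's vanishing (resp.\ non-vanishing) theorem finishes the argument. The points requiring care are: the extension of quasi-coherent sheaves from $U$ to $X$, so that Serre's criterion is applicable; the fact that $Z$ is a finite set of closed points in the direct implication, which becomes clear only after the reduction to the irreducible case; and the non-reduced situation, where one must either restrict to $\dim(R)=0$ or appeal to the purely module-theoretic form of Grothendieck's vanishing theorem.
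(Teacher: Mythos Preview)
Your proof is correct and follows the same overall strategy as the paper: the long exact sequence of local cohomology together with Serre's criterion reduces everything to controlling ${\rm H}^{j}_{Z}(X,\F)$, Lemma~\ref{lema1} identifies these groups with classical local cohomology over the local rings $R_{\mathfrak p}$, and Grothendieck's (non-)vanishing theorems finish both directions. The differences are only in execution. For the implication $\dim R\leq 1\Rightarrow$ every open is affine, the paper runs an induction on the number of closed points in $Z$ and invokes Mayer--Vietoris at the inductive step; you instead observe at once that ${\rm H}^{j}_{Z}\cong\bigoplus_{k}{\rm H}^{j}_{\{p_{k}\}}$ for a disjoint finite set of closed points, which is cleaner and collapses the induction. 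For the other implication, the paper argues directly that ${\rm H}^{i}_{p}(R_{p},-)=0$ for all $i>1$ and then cites the characterization \cite[Th.~6.1.2]{eee} to deduce $\dim R_{p}\leq 1$; your contrapositive via Grothendieck non-vanishing is exactly the same fact read the other way, and arguably more transparent. Your parenthetical remark that the module-theoretic form of Grothendieck vanishing already handles the non-reduced case (without passing to irreducible components) goes slightly beyond what the paper proves.
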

\begin{proof}Let us suppose that every open set of $X$ is affine and let
	$\mathcal{O}_{X}$ denotes the structural sheaf. Let us consider a closed point  $p\in X$ and its complement  $U=X-\{p\}$. For every quasi-coherent  $\mathcal{O}_{X}$-module  $\F$ we have the local cohomology exact sequence
	$$\cdots\rightarrow \text{H}^{i}_{p}(X,\F)\rightarrow \text{H}^{i}(X,\F)\rightarrow\text{H}^{i}(U,\F)\rightarrow \text{H}^{i+1}_{p}(X,\F)\rightarrow\cdots$$
	The sets $U$ and $X$ are affine open sets, so,   by the Serre's criterion \ref{serre}, $\F$ is acyclic on them, i.e.   $\text{H}^{i}(X,\F)=\text{H}^{i}(U,\F)=0 \hspace{0.1cm}\forall i>0$, and therefore $$\text{H}^{i}_{p}(X,\F)=0\hspace{0.5cm}\forall i>1\text{.}$$ This remark and the Lemma \ref{lema1} allow us to conclude that $$\text{H}^{i}_{p}(X,\F)=\text{H}^{i}_{p}(X_{p},\F_{p})=\text{H}^{i}_{p}(R_{p},\F_{p})= 0\hspace{0.2cm}\forall i>1$$
	In particular,  if  $\F_{p}$ is a finitely generated $R_{p}$-module, then, $\text{dim}(F_{p})\leq 1$ \break(see \cite[Th 6.1.2]{eee}), because $R_{p}$ is a noetherian local ring of maximal ideal  $p$. If we consider $\F=\mathcal{O}_{X}$, then  $\F_{p}=R_{p}$, and therefore the dimension of  $R_{p}$ is less or equal than 1 for each closed point, so  $\text{dim}(R)\leq 1$.\\\

	Reciprocally, let us suppose that $\text{dim}(R)=0$. In this situation, $R$ is an artinian ring and therefore its spectrum is a finite number of points equipped with the discrete topology. This implies that every open set is the complement of a finite number of closed points. If $\text{dim}(R)=1$ and $R$ is reduced, by  Lemma \ref{lemacrucial},   every open set  in $X=\Spec(R)$ is the complement of a finite number of closed points $\{p_{1},...,p_{r}\}$. In both cases, every open set $U$ in $X$ is the complement of a finite number of closed point $\{p_{1},...,p_{r}\}$.\\\
	
	Let us suppose that $U=X-\{p\}$ is the complement of a single closed point. Since $\text{dim}(R)\leq 1$, by the Grothendieck's vanishing theorem  $$\text{H}^{i}_{p}(R_{p},M)=0 \quad \forall i>1\geq \text{dim}(M)$$  for all  $R_{p}$-module finitely generated    $M$. Since every $R$-module is the direct limit of its finitely generated modules, and the cohomology commutes with direct limits, we conclude that for every quasi-coherent $\OO_{X}$-module $\F$ it holds that $\text{H}^{i}_{p}(X,\F)=0$ for all $i>1$.  On the other hand, since $X$ is affine, by the Serre's criterion for affineness,  every quasi-coherent sheaf of  $\mathcal{O}_{X}$-modules  is acyclic, and therefore, $\text{H}^{i}(X,\F)=0$ for all $i>0$. Using the  local cohomology exact sequence  we obtain that $\text{H}^{i}(U,\F_{\mid U})=0$ for all $i>0$. Now, since every quasi-coherent sheaf on $U$ is the restriction of a quasi-coherent sheaf on  $X$, by the Serre's criterion for affineness, $U$ is affine.\\\
	
	Our  induction hypothesis will be that if $U=X-\{p_{1},...,p_{r-1}\}$ is the complement of $r-1$ closed points, then $U$ is affine. Furthermore, by the local cohomology exact sequence,  our induction hypothesis implies that for each collection $Y=\{q_{1},...,q_{r-1}\}$ of $r-1$ closed points, and for each quasi-coherent $\OO_{X}$-module $\F$, the following holds
	\begin{equation}\label{inductiontwo}\text{H}^{i}_{Y}(X,\F)=0\hspace{0.5cm}\forall i>1\end{equation}
	Let us suppose that $U$ is the complement of $r$-closed points $\{p_{1},...,p_{r}\}$. We claim that $U$ is affine. Firstly, we are going to prove that if $Y=\{p_{1},...,p_{r}\}$, then, $$\text{H}^{i}_{Y}(X,\F)=0  \hspace{0.5cm}\forall i>1$$   for all quasi-coherent $\OO_{X}$-module $\F$. Let us use the following notation $$Y_{1}:=\{p_{1},...,p_{r-1}\}\quad \quad Y_{2}=\{p_{r}\}\text{.}$$ By the  Mayer-Vietoris exact sequence (see \cite[Ex. III, 2.4]{har})
	$$\cdots\rightarrow  \text{H}^{i}_{Y_{1}\cap Y_{2}}(X,\F)\rightarrow\text{H}^{i}_{Y_{1}}(X,\F)\oplus\text{H}^{i}_{Y_{2}}(X,\F)\rightarrow\text{H}^{i}_{Y_{1}\cup Y_{2}=Y}(X,\F)\rightarrow \text{H}^{i+1}_{Y_{1}\cap Y_{2}}(X,\F)\rightarrow\cdots$$ 
	and  by \eqref{inductiontwo}, we conclude that 
	\begin{equation}\label{aci}\text{H}^{i}_{Y}(X,\F)=0\hspace{0.5cm}\forall i>1\text{,}\end{equation}
	for all quasi-coherent $\OO_{X}$-module $\F$. For every quasi-coherent $\OO_{X}$-module $\F$ we have the local cohomology exact sequence
	$$\cdots\rightarrow \text{H}^{i}_{Y}(X,\F)\rightarrow \text{H}^{i}(X,\F)\rightarrow\text{H}^{i}(U,\F)\rightarrow \text{H}^{i+1}_{Y}(X,\F)\rightarrow\cdots$$
	Again, by the Serre's criterion for affineness
	$$\text{H}^{i}(X,\F)=0\hspace{0.5cm}\forall i>0$$
	and by $\eqref{inductiontwo}$
	$$\text{H}^{i}_{Y}(X,\F)=0\hspace{0.5cm}\forall i>1$$
	thus $$\text{H}^{i}(U,\F)=0 \hspace{0.5cm}\forall i\geq1$$
	Since every quasi-coherent sheaf on $U$ is the restriction of a quasi-coherent sheaf on $X$, by the Serre's criterion for affineness, we conclude that $U$ is affine.
\end{proof}

\end{document}